\documentclass[twoside, 10pt]{article}
\usepackage{amsmath,amsthm,amssymb,amscd,amstext,amsfonts}
\usepackage[utf8]{inputenc}
\usepackage{mathtools}
\usepackage{mathrsfs}
\usepackage{thmtools}
\usepackage{latexsym}
\usepackage{verbatim}
\usepackage{framed}
\usepackage{graphicx}
\usepackage{stmaryrd}
\usepackage{enumitem}
\usepackage{fullpage}
\usepackage{bm}
\usepackage{url}
\usepackage{physics}
\usepackage{dsfont}
\usepackage{todonotes}
\usepackage{tikz}
\usepackage{graphicx}
\usepackage{titling}
\usepackage{color}
\usepackage{xpatch}
\usepackage{algorithm}
\usepackage{algorithmic}
\usepackage{epsf}

\usepackage[small, explicit]{titlesec}
\titlelabel{\thetitle.\enspace}

\usepackage[small, labelfont=bf, labelsep=period]{caption}

\usepackage[breaklinks=false, linktocpage=true,]{hyperref}

\titleformat{\paragraph}[runin]%
{\normalfont\normalsize\bfseries}{\theparagraph}{1em}{#1}[.]
\titlespacing{\paragraph}{0pt}{1.5ex plus 1ex minus .2ex}{0.7em}

\hypersetup{
	colorlinks = true,
    linkcolor={purple},
    urlcolor={purple},
    citecolor={blue!80!black}
}

\makeatletter
\def\tagform@#1{\maketag@@@{(\ignorespaces{\oldstylenums{#1}}\unskip\@@italiccorr)}}
\renewcommand{\eqref}[1]{\textup{{\normalfont(\oldstylenums{\ref{#1}}}\normalfont)}}
\newcommand{\ps@bookheader}{%
\renewcommand\@oddfoot{\hfil}%
\renewcommand\@evenfoot{\hfil}%
\renewcommand\@oddhead{\ifnum\value{page}>1
{\small\hfil BEKE, GOH, HATAMI, JAFFE, AND NAYLOR}\hfil\thepage\else\hfil\fi}}
\renewcommand\@evenhead{\thepage\hfil\small A CHARACTERIZATION OF IDEMPOTENT SCHUR MULTIPLIERS\hfil}%
\makeatother
\usepackage[letterpaper, total={5.5in, 8in}, vmarginratio=1:1, hmarginratio=1:1, headsep=21pt]{geometry}

\usepackage[nameinlink, noabbrev, capitalize]{cleveref}

\declaretheoremstyle[bodyfont=\normalfont\slshape, notefont=\normalfont\itshape,notebraces={{\rm(}}{{\rm)}}, postheadspace=0.5em,headpunct={\rm.}, spaceabove=8pt, spacebelow=8pt]{slbody}%
\declaretheoremstyle[bodyfont=\normalfont\slshape, notefont=\normalfont\itshape,notebraces={{\rm(}}{{\rm)}}, postheadspace=0.5em,headpunct={.}, spaceabove=8pt, spacebelow=8pt]{slbodynoparen}

\declaretheorem[name=Theorem, numberwithin=section, style=slbody]{theorem}
\declaretheorem[name=Lemma, numberwithin=section, sibling=theorem, style=slbody]{lemma}

\declaretheorem[name=Theorem, numberwithin=section, sibling=theorem, style=slbodynoparen]{theoremnoparen}
\declaretheorem[name=Lemma, numberwithin=section, sibling=theorem, style=slbodynoparen]{lemmanoparen}

\declaretheorem[name=Proposition, numberwithin=section, sibling=theorem, style=slbodynoparen]{propositionnoparen}

\renewcommand\norm[1]{\left|\!\left|#1\right|\!\right|}

\newcommand\normmax[1]{|\!|#1|\!|_{\rm max}}

\newcommand\normrow[1]{\left|\!\left|#1\right|\!\right|_{\rm row}}
\newcommand\normcol[1]{\left|\!\left|#1\right|\!\right|_{\rm col}}
\newcommand\normgamma[1]{|\!|#1|\!|_{\gamma_2}}

\newcommand\normmul[1]{\left|\!\left|#1\right|\!\right|_{\rm m}}

\newcommand\eps{\epsilon}

\renewcommand\hat{\widehat}

\DeclareMathOperator{\block}{block}

\newcommand{\RR}{\mathbf{R}}   
\newcommand{\CC}{\mathbf{C}}   
\newcommand{\ZZ}{\mathbf{Z}}   

\newcommand{\one}{\mathop{\mathbf{1}}\nolimits}

\newcommand\ex{\mathop{\mathbf{E}}\nolimits}
\newcommand{\EQ}{\hbox{\rm\scriptsize EQ}}

\DeclareMathOperator\D{D}

\renewcommand{\maketitle}{%
  \begin{center}
    {\large\bf A characterization of idempotent Schur multipliers}\\
    \vskip 36pt
    {\sc Csongor Beke, Marcel K. Goh, Hamed Hatami,}
    \\
    \medskip
    {\sc Sean Jaffe, {\rm and} Daniel Naylor}
    \medskip
    \vskip 36pt
  \end{center}
}
\title{}\author{}\date{}

\begin{document}

\maketitle 
 
\renewenvironment{abstract}{\quotation\noindent\small{\bfseries\abstractname.\enspace}}{\endquotation}

\begin{abstract}
We prove that every idempotent Schur multiplier is a finite signed sum of contractive idempotent Schur multipliers. This was conjectured by Katavolos and Paulsen in 2003 and previously known only for translation-invariant Schur multipliers, by the Cohen--Host idempotent theorem.

Concretely, we show that any boolean matrix $A$ with Schur multiplier norm at most~$\gamma$ (or equivalently $\lVert A\rVert_{\gamma_2} \le \gamma$) can be written as
\[ A=\sum_{i=1}^{L}\sigma_i B_i,\]
where $L\leq 2^{C\gamma^6}$ for an absolute constant $C$, $\sigma_i\in\{-1,1\}$ are signs, and each $B_i$ is a contractive idempotent Schur multiplier, that is, a boolean matrix whose $1$-entries form a union of all-one rectangular blocks, with no two blocks sharing a row or a column.
\vskip5pt
\noindent\textbf{Keywords.}\enspace Schur multipliers, factorization norm.
\vskip5pt
\noindent\textbf{MSC2020 Classification.}\enspace 15B36, 47L80, 94D10.
\end{abstract}

\vskip50pt
\baselineskip=13.5pt

\section{Introduction}
Let $X$ and $Y$ be finite or countable sets. We identify each bounded operator
$T:\ell_2(Y)\to\ell_2(X)$ with its matrix indexed by $X\times Y$, and write
$\norm{T}$ for its operator norm. For a real-valued matrix
$A:X\times Y\to\RR$, let $A\circ T$ denote the entrywise (Schur) product. The \emph{Schur multiplier norm} of $A$ is defined  by
\[ \normmul{A}= \sup_{0 < \norm T < \infty} \frac{\norm{A\circ T}}{\norm T},\]
where the value is understood to be infinite if $A\circ T$ does not define
a bounded operator for some bounded $T$. We call $A$ a \emph{Schur
multiplier} whenever $\normmul{A}<\infty$.

This paper concerns boolean matrices of bounded Schur multiplier norm, or equivalently, subsets $S \subseteq X \times Y$ with the property that replacing entries outside $S$ by $0$ in any bounded operator $T$ increases its operator norm by at most a bounded factor.

A boolean matrix $B:X \times Y \to \{0,1\}$ is called \emph{blocky} if its support is exactly $\bigcup_{i \in I} X_i \times Y_i$ for some families $\{X_i\}_{i\in I}$ and $\{Y_i\}_{i\in I}$ that are each pairwise disjoint. Every blocky matrix has Schur multiplier norm at most $1$: if $B$ is blocky, then for any $T$, the matrix $B \circ T$ is the direct sum of the submatrices $T_{X_i \times Y_i}$, so 
\begin{equation}
\label{eq:upper_contractive}
\norm{B \circ T}=\sup_{i \in I} \norm{T_{X_i \times Y_i}} \le \norm{T}.
\end{equation}
All nonzero boolean matrices $A$ satisfy $\normmul A \ge 1$, and so every nonzero
blocky matrix has $\normmul{B}=1$.

Consequently, by the triangle inequality, any boolean matrix that is a signed sum of blocky matrices has small Schur multiplier norm. Our main theorem represents a converse to this statement.
  
\begin{theoremnoparen}\label{thmmain}
There exists an absolute constant $C$ such that for any boolean matrix $A$ with $\normmul{A} \le \gamma$, there is some $L \le 2^{C\gamma^6}$ such that
\[ A = \sum_{i = 1}^L \sigma_i B_i,\]
for some blocky matrices $B_i$ and $\sigma_i \in \{-1,1\}$.
\end{theoremnoparen}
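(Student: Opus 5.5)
We first reduce to finite matrices. Both the Schur multiplier norm and the class of finite signed sums of at most $L$ blocky matrices behave well under restriction to finite submatrices. A compactness (diagonal) argument then passes from finite $X\times Y$ to countable sets: for each finite subrectangle we get a decomposition with $L$ bounded uniformly, each encoded by finitely many partitions, and a limit along an exhausting sequence gives a decomposition of $A$. So it suffices to prove the bound for finite boolean $A$ with $\normgamma{A}=\normmul{A}\le\gamma$. By Grothendieck's characterization, this means there are vectors $u_x,v_y$ in the unit ball times $\sqrt\gamma$ with $A(x,y)=\langle u_x,v_y\rangle$.

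The strategy is induction on a potential that decreases by a definite amount at each step. The key structural lemma should say that every nonzero boolean $A$ with $\normgamma{A}\le\gamma$ contains a large monochromatic-type structure that can be peeled off. Concretely, I would aim to show the following. Either $A$ is already blocky-up-to-$O(1)$-terms, or one can partition the rows and columns into $2^{O(\gamma^{c})}$ parts such that on each product part, $A$ is constant or has strictly smaller $\gamma_2$ norm by an additive amount $\delta(\gamma)$, say $\delta\approx 1/\gamma^{c'}$.

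The main tool is the Green–Sanders-style quantitative idempotent argument, adapted to matrices via the factorization. Boolean entries force $\langle u_x,v_y\rangle\in\{0,1\}$. After a Johnson–Lindenstrauss/random-rounding step, the vectors can be quantized to a net of size $2^{O(\gamma^{c})}$ while approximately preserving the $0/1$ pattern. The rows with the same rounded vector then form classes on which $A$ is determined. Errors are handled by an almost-periodicity statement: the set of $(x,y)$ where the approximation fails has small density and can itself be absorbed.

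The decomposition is assembled recursively. Write $A$ as a sum over the block partition. Each piece on a product of classes is either all ones, all zeros, or of reduced norm. Grouping the all-one blocks that share no row or column class gives blocky matrices, and a bounded-degree coloring of the "block graph" needs only $2^{O(\gamma^c)}$ colors. Each layer has at most $2^{O(\gamma^{c})}$ blocky terms. Since the norm decreases by $\delta(\gamma)$ each layer, there are $O(\gamma/\delta)$ layers, which gives $L\le 2^{C\gamma^6}$ after tuning exponents.

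One subtlety is that restricting to submatrices does not increase $\gamma_2$. The inductive step must also hold for the pieces, and signs appear when we subtract blocky pieces to express differences, for instance $A=J_{\text{block}}-(\text{complement pattern})$ inside a class product. The main obstacle is the norm-decrement lemma: showing that a non-constant boolean piece whose rows and columns have nearly identical factorization vectors must have strictly smaller $\gamma_2$ norm, or else admits a rectangle cover. I would first try an energy-increment argument, measuring $\sum_x\|u_x\|^2$-type potentials and showing that the refinement forced by non-constancy lowers the optimal factorization cost by a quantum of order $\gamma^{-O(1)}$.
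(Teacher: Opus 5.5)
Your reduction to finite matrices matches the paper, but beyond that the proposal does not contain a proof. The structural lemma that carries all the weight is stated loosely and then explicitly left open (``the main obstacle is the norm-decrement lemma''), and the tools you propose for it are not dimension-free.

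The quantization step already fails for $A=I_n$, which has $\normgamma{A}=1$. If $A$ is to be constant on each product of a row class and a column class, two distinct rows $i\ne j$ cannot share a class, since the entries $A(i,i)=1$ and $A(j,i)=0$ would lie in the same piece; so $n$ classes are needed. If you only ask for agreement outside a set of small density, every row of $I_n$ agrees with the zero row except on a $1/n$ fraction of entries. The exceptional set is then the whole support, and ``absorbing the errors'' is the entire problem. Johnson--Lindenstrauss preserves the relevant inner products only in dimension $\gamma^{O(1)}\log n$, giving nets of size $n^{\gamma^{O(1)}}$, which is exactly the dimension dependence the theorem removes.

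The norm decrement via restriction is also unavailable in general. For $J_n-I_n$, any partition of rows and of columns into $K$ classes leaves a piece containing a principal submatrix $J_m-I_m$ with $m\ge n/K^2$. By the trace-norm bound $\normgamma{J_m-I_m}\ge 2-2/m$, while $\normgamma{J_n-I_n}\le 2$. So restricted boolean pieces need not get any definite drop.

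The missing ideas are the ones the paper's proof is built on.
\begin{itemize}
\item Following Green--Sanders, it proves a stronger statement for $\eps$-almost integer-valued matrices. This lets it \emph{split factorization vectors} instead of restricting to submatrices. The outputs $A'=UV'$ and $A''=UV''$ are not boolean, only almost integer-valued, with $A'_\ZZ+A''_\ZZ$ equal to the original.
\item Sparse structure like $I_n$ is handled by the parameter $D(A)=\max_x\sum_y|A_\ZZ(x,y)|$ and the greedy bound $\block(A_\ZZ)\le 2D(A)$.
\item When $D(A)$ is large, a heavy row and pigeonhole give many columns on which that row equals a fixed nonzero $b$. \Cref{propalphasubset} shrinks these to a set $S'$ with $|S'|\ge 2$ on which every row is almost constant. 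Then $\hat v=\ex_{y\in S'}v_y$ has almost-integer inner products with every $u_x$, and $\norm{\hat v}\ge 1/2$.
\item \Cref{lemsubtractaverage} yields $y_0\in S'$ with $\norm{v_{y_0}-\hat v}^2\le\gamma^2-1/8$. The parallelogram law, using that distinct columns of $A_\ZZ$ have vectors at distance at least $1/2$, gives $\norm{\hat v}^2\le\gamma^2-1/16$.
\item Peeling columns off as $v_{y_0}=(v_{y_0}-\hat v)+\hat v$ until $D$ is small, and then recursing, lowers $\normgamma{\cdot}^2$ by $1/16$ per level. The recursion tree therefore has depth at most $16\gamma^2$. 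The almost-integrality error triples per level, hence the choice $\eps=2^{-32\gamma^2-5}$. Each of the $2^{O(\gamma^2)}$ nodes contributes $(\gamma/\eps)^{O(\gamma^4)}=2^{O(\gamma^6)}$ blocky terms.
\end{itemize}
Your intuition that a quadratic energy should drop by a fixed quantum is right. Without the almost-integer framework and this vector-splitting step, however, you have no mechanism that achieves it.
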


\Cref{thmmain} is a quantitative characterization of the idempotents of the algebra of
Schur multipliers in terms of their norm. We proceed to elaborate upon what this means.

\paragraph{The algebra of Schur multipliers} Since the Schur multiplier norm is sub-additive and sub-multiplicative, that is, $\normmul{A+B}\le\normmul{A}+\normmul{B}$ and $\normmul{A\circ B}\leq \normmul{A}\normmul{B}$, the set of Schur multipliers is closed under addition and Schur product, and hence forms a commutative algebra with respect to these operations.  A Schur multiplier $A$ is an \emph{idempotent} of this algebra if $A\circ A=A$. Since this identity holds entrywise, the idempotent Schur multipliers are precisely the boolean matrices with finite Schur multiplier norm.

Livshits~\cite{liv95} showed that blocky matrices are exactly the \emph{contractive} idempotents, namely those with $\normmul{A} \le 1$. We have already shown that blocky matrices are contractive. For the converse, Livshits' characterization reduces to a single computation: a boolean matrix is blocky precisely when it contains no $2\times2$ submatrix with
exactly three $1$-entries, and since the Schur multiplier norm does not increase upon
passing to a submatrix, the calculation
\[
\normmul{\begin{pmatrix}1&0\cr 1&1\end{pmatrix}}=\frac{2}{\sqrt{3}}>1
\] 
shows that any such submatrix already forces $\normmul{A}>1$. Thus \Cref{thmmain}
expresses every idempotent Schur multiplier as a finite signed sum of contractive
ones, with the number of summands bounded in terms of the norm.

It will be convenient to introduce a notation for this number of summands. We define the \emph{block complexity} $\block(A)$ of a matrix $A$ to be the least integer $L$ for
which there exist blocky matrices $B_1,\ldots,B_L$ and signs
$\sigma_1,\ldots,\sigma_L\in\{-1, 1\}$ such that $A = \sum_{i=1}^{L}\sigma_i B_i$, with
$\block(A)=\infty$ if no such representation exists. In this notation, \Cref{thmmain}
states $\block(A)\le 2^{C\normmul{A}^6}$, and the triangle inequality shows
$\normmul{A}\le\block(A)$.

When $A$ is an infinite-dimensional boolean matrix, \Cref{thmmain} shows that
$\normmul A < \infty$ if and only if $A$ is a signed sum of finitely many blocky
matrices; equivalently, every idempotent Schur multiplier is a finite signed sum of
contractive idempotents.  The question of whether this holds was, to our knowledge,
first raised by Katavolos and Paulsen~\cite{kp05}. By 2010, it had reportedly become ``one of the difficult open problems in the area''~\cite{MR2777487} and has repeatedly been highlighted since; see for instance
 \cite{todorov2015} and \cite[Question~3.13]{elt16}. Compactness arguments,
together with standard inequalities relating the Schur multiplier norm and block
complexity to other matrix parameters, yield a number of equivalent formulations; see
the application to equality-oracle complexity below, and~\cite{hhh2023} for a fuller
list.  

Despite attracting interest for over two decades, this problem did not see any partial progress towards its resolution until 2025, when a paper of Balla, Hambardzumyan, and Tomon~\cite{BHTpublished26} showed that every finite-dimensional boolean matrix of bounded Schur multiplier norm contains a monochromatic submatrix of constant density. Subsequently, Goh and Hatami~\cite{blockysubset2025} proved that the support of such a matrix
contains a blocky matrix covering a significant portion of its $1$-entries. In~\cite{blockypolylog2025}, the same authors obtained the bound $\block(A)\le 2^{O(\normmul{A}^7)}(\ln n)^2$ for $n\times n$ boolean matrices $A$; \Cref{thmmain} removes the dependence on the dimension $n$.  

We devote the rest of the introduction to establishing some of the background results leading up to this problem, as well as some related developments. We begin by discussing Cohen's idempotent theorem for the Fourier--Stieltjes algebra and its quantitative refinement by
Green and Sanders. \Cref{thmmain} may be viewed as a Schur-multiplier analogue of these classical results.
 
\paragraph{Cohen's idempotent theorem}
In 1960, Cohen~\cite{cohen1960} obtained a complete characterization of the idempotents of the Fourier--Stieltjes algebra $B(G)$ of a locally compact abelian group $G$. He proved that a function $f\in B(G)$ satisfies $f^2=f$ if and only if $f$ is the indicator function of a set in the coset ring of $G$, that is, the ring of subsets generated by the cosets of open subgroups of $G$. Equivalently, the idempotents of $B(G)$ are precisely the boolean functions on $G$ can be expressed as a finite signed sum
\begin{equation}
f=\sum_{i=1}^{L}\sigma_i\one_{a_i+H_i},
\end{equation}
where $\sigma_i\in\{-1,1\}$ and each $a_i + H_i$ is a coset of an open subgroup of $G$.
Cohen's theorem is qualitative:~it does not bound the number of cosets in
the representation. Green and Sanders~\cite{greensandersannals} later
proved a quantitative version in which $L$ is bounded in terms of
$\norm{f}_{B(G)}$.  Host~\cite{host1986} subsequently extended Cohen's theorem
to locally compact groups that need not be abelian, and
Sanders~\cite{sanders2011} obtained the corresponding quantitative
extension.

These results can be viewed as characterizations of arbitrary idempotents in
terms of contractive ones. Indeed, the nonzero contractive idempotents of
$B(G)$ are precisely the indicators of single open cosets. Cohen's theorem
and Host's extension therefore represent every idempotent as a finite signed
sum of contractive idempotents, while the results of Green--Sanders and
Sanders bound the number of summands in terms of the norm. \Cref{thmmain}
establishes the corresponding qualitative and quantitative statements for
Schur multipliers.

There is also a more direct connection between the two settings. For
simplicity, let $G$ be a finite abelian group and, given $f:G\to\CC$, define
the matrix $M_f$ by putting
\[
 M_f(x,y)=f(x-y)
\]
for all $x,y\in G$.
Then $\normmul{M_f}=\norm{f}_{B(G)}$; see, for example,
\cite[Corollary~3.13]{hhh2023}. Thus, the map $f\mapsto M_f$ is an isometric
algebra embedding of $B(G)$ into the algebra of Schur multipliers on
$G\times G$. Moreover, if $f$ is the indicator of a coset $a+H$ of a
subgroup $H\leq G$, then $M_f$ is blocky: its row and column blocks are
cosets of $H$, paired according to translation by $a$.
Thus Green--Sanders theorem proves \Cref{thmmain} for matrices of the form $M_f$, and in particular, the blocky matrices $B_i$ appearing in the decomposition can be chosen to take the form $B_i(x, y) = \one_{x - y \in a_i + H_i}$. The latter property is not implied by \Cref{thmmain}; in particular, \Cref{thmmain} does not strictly generalize the Green--Sanders theorem.

\paragraph{The factorization norm} In the proof of \Cref{thmmain}, rather than working directly with the Schur multiplier norm, we use an equivalent factorization norm. The \emph{$\gamma_2$ factorization norm} (or simply the \emph{$\gamma_2$ norm}) of a real matrix $A$ is defined by
\begin{equation}
\normgamma{A}
=
\min_{UV=A}\normrow{U}\normcol{V},
\end{equation}
where the minimum is taken over all factorizations $A=UV$, $\normrow{U}$ denotes the maximum $\ell_2$-norm of a row of $U$, and $\normcol{V}$ denotes the maximum $\ell_2$-norm of a column of $V$.

The $\gamma_2$ norm does not increase  by restricting to a submatrix, since deleting rows of $U$ or columns of $V$ cannot increase $\normrow{U}$ or $\normcol{V}$. In particular,
\[
\normgamma{A}\geq \normmax{A}
=
\max_{(x,y)\in X\times Y}|A(x,y)|.
\]
A classical characterization of Schur multipliers, due to Grothendieck~\cite{grothendieck}, identifies the Schur multiplier norm with the $\gamma_2$ factorization norm:~we have
$\normmul{A}=\normgamma{A}$ for every matrix $A$.

For a real-valued matrix $A:X\times Y \to \RR$ with $\normgamma A\le \gamma$, we define a \emph{$\gamma$-factorization} of $A$ to be a factorization $A = UV$ where $\normrow{U}\le 1$ and $\normcol V\le \gamma$. In such a factorization we shall often index the rows of $U$ by $x\in X$, and the columns of $V$ by $y\in Y$.

\paragraph{Equality oracles}
We briefly discuss a consequence of \Cref{thmmain} in the realm of communication complexity. Let $X$ and $Y$ be finite sets and let $A\in \{0,1\}^{X\times Y}$ be a boolean matrix indexed by these sets. Alice and Bob have access to an oracle that can take arbitrary inputs $s$ and $t$ from Alice and Bob respectively and output the bit $\one_{[s=t]}$ at unit cost. Together they wish to devise a protocol so that if Alice knows $x\in X$ and Bob knows $y\in Y$, they can determine the value of the bit $A(x,y)$ in as few calls to the oracle as possible. The maximum number of oracle invocations needed (over all $x$ and $y$) is the \emph{cost} of a protocol, and the minimum cost of any protocol is the \emph{equality-oracle complexity} of $A$, denoted $\D^{\EQ}(A)$. It is known~\cite[Proposition 3.1]{hhh2023} that
\begin{equation}\frac12\log_2 \block(A) \le \D^{\EQ}(A) \le \block(A)\end{equation}
holds for any boolean matrix $A$. Combining this with \Cref{thmmain} and the inequality $\normgamma{A}\leq\block(A)$ yields
\[
\frac{1}{2}\log_2 \normgamma{A}
\leq
\D^{\EQ}(A)
\leq
2^{C\normgamma{A}^6}
\]
for every finite boolean matrix $A$. Thus, \Cref{thmmain} provides a dimension-free analytic characterization of equality-oracle complexity in terms of the $\gamma_2$ norm. In particular, a family of boolean matrices has uniformly bounded equality-oracle complexity if and only if its $\gamma_2$ norms are uniformly bounded.

\paragraph{A note on notation} Expressions such as $f = O(g)$ always hide absolute constants. We write $A \in S^{X\times Y}$ to mean that $A$ is a matrix with rows indexed by elements of $X$, columns indexed by elements of $Y$, and entries taking values in the set $S$. For subsets $X'\subseteq X$ and $Y'\subseteq Y$, the expression $A_{X'\times Y'}$ denotes the matrix $A$ restricted to rows in $X'$ and columns in $Y'$.

\paragraph{Statement regarding artificial intelligence}
All of the mathematical ideas presented in this paper are human-generated.

\section{Proof of the main theorem}
A straightforward compactness argument, recorded in~\cite[Theorem 3.10]{hhh2023}, shows that without losing generality we may assume that $A$ is a finite dimensional matrix. We therefore predominantly work with finite-dimensional matrices in this section.

Just as in~\cite{blockypolylog2025} and~\cite{greensandersannals}, we prove the main theorem by inductively proving a stronger statement regarding almost-integer matrices. Given a matrix $A\in \RR^{X\times Y}$, let $A_\ZZ\in \ZZ^{X\times Y}$ denote the entrywise rounding of $A$ to the nearest integer matrix, where we round $m+1/2$ down to $m$ for all $m\in \ZZ$. For any parameter $\eps > 0$, we say that a real-valued matrix $A\in \RR^{X\times Y}$ is \emph{$\eps$-almost integer-valued} if $\normmax{A-A_\ZZ}\le \eps$.

For any real-valued matrix $A\in \RR^{X\times Y}$ we define 
\[D(A) = \max_{x \in X} \sum_{y\in Y} \bigl| A_\ZZ(x,y)\bigr|;\]
note that the value of $D(A)$ depends only on $A_\ZZ$. We import the following greedy decomposition lemma from~\cite{blockypolylog2025}.

\begin{lemma}[{\rm\cite{blockypolylog2025}}, Proposition 4.1]\label{lemblockcomplexitybound}
Every integer-valued matrix $A\in \ZZ^{X\times Y}$ satisfies
\begin{equation}\block(A) \le 2D(A).\end{equation}
\end{lemma}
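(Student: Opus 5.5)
We need to prove $\block(A)\le 2D(A)$ for an integer matrix $A$, where $D=D(A)$ is the maximum row $\ell_1$ norm. The plan is to write $A=A^+-A^-$, with $A^\pm$ nonnegative integer matrices whose row sums are $r^+_x$ and $r^-_x$, and to show that any nonnegative integer matrix $P$ with maximum row sum at most $d$ is a sum of at most $d$ blocky matrices. Applying this to $A^+$ and $A^-$ separately gives at most $2D$ blocky summands, since each of $A^+$ and $A^-$ has row sums at most $D$.

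For a nonnegative integer matrix $P$ with row sums at most $d$, first write $P=\sum_{k\ge1} P_k$, where $P_k=\one_{[P\ge k]}$. Each $P_k$ is a boolean matrix and at most $d$ of them are nonzero, but this does not yet give the bound. The cleaner route is to decompose directly into boolean matrices in which every row has at most one $1$-entry. Such a matrix is automatically blocky: group the rows according to the column where their unique $1$ sits. The blocks are then $X_y\times\{y\}$, where the sets $X_y$ are pairwise disjoint and the singleton column sets $\{y\}$ are pairwise disjoint.

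To get this decomposition, for each row $x$ list its $1$-units as a sequence $y_1,\dots,y_{s_x}$ of $s_x\le d$ columns, with column $y$ repeated $P(x,y)$ times. For $j=1,\dots,d$, let $B_j$ be the boolean matrix whose row $x$ has a single $1$ at column $y_j$ when $j\le s_x$, and is zero otherwise. Then $P=\sum_{j=1}^d B_j$, and each $B_j$ has at most one $1$ per row, so each $B_j$ is blocky. Hence $\block(P)\le d$, and by the triangle-type additivity of block complexity, $\block(A)\le \block(A^+)+\block(A^-)\le 2D(A)$. The sign $\sigma=-1$ is applied to the summands coming from $A^-$.

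There is no serious obstacle. The only point to check is that "one $1$ per row" really yields disjoint row and column families. It does, since distinct blocks correspond to distinct columns. If $X$ is infinite, the same construction works row by row because $D(A)<\infty$. (When $D(A)=\infty$ the statement is vacuous.)
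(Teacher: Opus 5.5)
Your proof is correct. Splitting $A=A^+-A^-$ and peeling off one unit per row at each step gives, for each sign, at most $D(A)$ boolean matrices with at most one $1$ per row, and each such matrix is blocky with blocks $X_y\times\{y\}$. The paper does not reproduce a proof and simply imports this ``greedy decomposition lemma'' from \cite{blockypolylog2025}; your argument is essentially that greedy peeling.
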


We shall need two more auxiliary results. The first one, which performs much of the heavy lifting in our final proof, synthesizes two propositions from~\cite{blockypolylog2025}. These were concerned with the Littlestone dimension of a matrix, a generalization of {\small VC} dimension; however, the result we recount here, which shall be instrumental to our final proof, can be stated without directly recalling this definition.

\begin{propositionnoparen}\label{propalphasubset}
There is an absolute constant $C$ such that the following holds.
Let $\eta>0$ be a parameter and suppose that $A\in \RR^{X\times Y}$ has $\normgamma A\le \gamma$ and $\normmax A\le M$. Then there exists a subset $S\subseteq Y$ with 
\begin{equation}|S|\ge |Y| \biggl(\frac{\eta}{\lceil 16M\rceil}\biggr)^{C\gamma^4}\end{equation}
and a function $g : X\to [-M,M]$ such that for every $x\in X$,
\begin{equation} \Pr_{y \in S}\Bigl[\bigl| A(x,y) - g(x)\bigr|\ge 1/4 \Bigr]\le\eta.\end{equation}
\end{propositionnoparen}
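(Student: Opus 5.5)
We will derive \Cref{propalphasubset} from two ingredients: a bound on the Littlestone dimension of a thresholded version of $A$, and a standard density-increment (or ``majority vote'') argument for classes of bounded Littlestone dimension. The plan is to quantize the range $[-M,M]$ into about $\lceil 16M\rceil$ intervals of length $1/8$, and to treat each row $A(x,\cdot)$ as a function from $Y$ to this finite alphabet of size $k=\lceil 16M\rceil$. The goal is then to find a large set $S$ on which every row is $\eta$-almost constant up to an error of $1/4$.

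\textbf{Step 1.} First we show that the family of thresholded rows has small Littlestone dimension. For a threshold $t$, consider the boolean matrix $\one[A(x,y)\ge t]$, or better a ``margin'' version that distinguishes $A\ge t+1/16$ from $A\le t-1/16$. A Littlestone tree of depth $d$ for this partial class gives a sequence of points forming a half-graph-like pattern with margin $1/8$. Using the factorization $A=UV$ with $\normrow U\le 1$ and $\normcol V\le\gamma$, a margin argument in the style of perceptron mistake bounds shows that $d=O(\gamma^2/\text{margin}^2)=O(\gamma^2)$. Here the online learner for linear functionals with margin $1/8$ on vectors of norm $\gamma$ makes $O(\gamma^2)$ mistakes, and the Littlestone dimension is bounded by the mistake bound. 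Summing over the $k$ thresholds, the $k$-valued class has multiclass Littlestone dimension at most $O(\gamma^2\log k)$, or we can simply handle each threshold separately.

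\textbf{Step 2.} Next we use iterated restriction. Consider the uniform distribution on the current $S$ (initially $Y$). If every row $x$ already has some value class containing all but an $\eta$ fraction of $S$, we set $g(x)$ to that value and stop; the tolerance $1/4$ absorbs the quantization. Otherwise some row splits $S$ into at least two parts, each of mass at least $\eta/k$. We pass to one part, chosen so that the Littlestone dimension of the class restricted to $S$ decreases; this is the standard fact that, for a nontrivial split, one side of the corresponding Littlestone tree has strictly smaller dimension. Each step shrinks $|S|$ by a factor of at least $\eta/k$, and the number of steps is bounded by the total dimension $d$. This yields $|S|\ge |Y|(\eta/k)^{d}$.

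\textbf{Main obstacle.} The difficulty is in Step 2: a split on a single row does not obviously reduce the Littlestone dimension of the whole class. We need the refined version in which we choose a part where the dimension drops, but only among parts of mass at least about $\eta/k$. If the dimension does not drop on any heavy part, then the heavy parts together with their tree structure would embed a deeper tree, which is a contradiction.

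This is also why the exponent is $\gamma^4$ rather than $\gamma^2$. One way it arises is to bound the Littlestone dimension not via the perceptron directly but through the ``$\gamma_2$ implies bounded half-graph'' bound of the form $O(\gamma^4)$ from \cite{blockypolylog2025}. I would invoke the two propositions of \cite{blockypolylog2025} in exactly this form: one bounding the Littlestone dimension by $C\gamma^4$, and one giving the almost-monochromatic subset of size $(\eta/k)^{\text{dim}}|Y|$. The bulk of the proof is then checking that the quantization and margins compose to give the error $1/4$.
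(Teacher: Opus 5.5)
Your final plan is the paper's proof. The paper simply chains Proposition~3.2 of \cite{blockypolylog2025} (the $\alpha$-weighted Littlestone dimension is bounded in terms of $\normgamma{A}$) with Proposition~3.1 there (small such dimension yields a large set of columns on which every row is almost constant), taking $\alpha=1/8$.

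Your self-contained Steps~1--2 need two corrections. First, the halving step lowers the dimension of the columns in $S$ viewed as functions on $X$, which is the transpose of your Step~1 class. Second, the split must be margin-separated, e.g.\ the most popular bin of length at most $1/8$ against the heavier side of the mass at distance $\ge 1/4$ from its center. With these fixes, and the node thresholds $|t|\le M\le\gamma$ absorbed into an extra coordinate scaled by $\gamma$, your perceptron bound would give exponent $O(\gamma^2)$, which already implies the stated bound, so the $\gamma^4$ needs no separate explanation.
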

\begin{proof}
Qualitatively, Proposition~3.2 of that paper says that the $\alpha$-weighted Littlestone dimension of a matrix can be bounded in terms of its $\gamma_2$ norm, and Proposition~3.1 says that in any matrix with small $\alpha$-weighted Littlestone dimension, there is a large subset of the columns such that every row is almost constant on that subset. Chaining these two results with $\alpha = 1/8$ yields exactly the claimed statement. 
\end{proof}

In addition to this, we will also use the following simple lemma concerning the number of vectors that are close to an average vector, which follows from a first moment calculation.

\begin{lemma}[{\rm\cite{blockypolylog2025}}, Proposition 4.2]\label{lemsubtractaverage}
Let $v_1, \ldots, v_r$ be vectors in a Hilbert space, and let $\hat v = \ex_{i\in [r]} v_i$ denote their average. If  $\norm{v_i} \le \gamma$ for all $1\le i\le r$, and $\norm{\hat v} = \theta$, then
\begin{equation}S = \bigl\{ i\in [r] : \norm{v_i-\hat v}^2 \le \norm{v_i}^2 - \theta^2/2\bigr\},\end{equation}
satisfies $|S| \ge \theta^2 r / (2\gamma^2)>0$.
\end{lemma}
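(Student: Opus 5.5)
The plan is to turn the defining condition of $S$ into a linear condition in the inner product with $\hat v$, and then run a first-moment (averaging) argument on these inner products. Throughout I take $\theta>0$; this is implicit in the conclusion $|S|>0$. If $\theta=0$, then every index lies in $S$, since the condition reads $\norm{v_i}^2\le\norm{v_i}^2$, and the inequality $|S|\ge 0$ is trivial.

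\emph{Step 1: rewrite the condition.} Expanding $\norm{v_i-\hat v}^2=\norm{v_i}^2-2\langle v_i,\hat v\rangle+\theta^2$ (taking real parts of the inner product if the space is complex) shows that
\[ i\in S \iff \langle v_i,\hat v\rangle \ge \tfrac34\theta^2 .\]

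\emph{Step 2: record two facts about $t_i=\langle v_i,\hat v\rangle$.} First, by the definition of $\hat v$ we have $\sum_{i} t_i = r\langle \hat v,\hat v\rangle = r\theta^2$. Second, by Cauchy--Schwarz each $t_i\le \norm{v_i}\,\norm{\hat v}\le \gamma\theta$. The same bound gives $\theta\le\gamma$, since $\theta\le \ex_i\norm{v_i}\le\gamma$.

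\emph{Step 3: average.} Split the sum over $S$ and its complement. Each $t_i$ with $i\notin S$ is less than $\tfrac34\theta^2$, so
\[ r\theta^2 \le |S|\,\gamma\theta + (r-|S|)\tfrac34\theta^2 .\]
Rearranging gives $|S|\,(\gamma\theta-\tfrac34\theta^2)\ge \tfrac14 r\theta^2$. The coefficient $\gamma\theta-\tfrac34\theta^2$ is positive because $0<\theta\le\gamma$, so
\[ |S| \ge \frac{r\theta}{4\gamma-3\theta}. \]
In particular $|S|>0$.

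\emph{Step 4: compare with the claimed constant.} It remains to check that $\theta/(4\gamma-3\theta)\ge \theta^2/(2\gamma^2)$. Clearing the positive denominators, this is equivalent to $2\gamma^2\ge 4\gamma\theta-3\theta^2$, that is, $2\gamma^2-4\gamma\theta+3\theta^2\ge 0$. This quadratic form in $(\gamma,\theta)$ has discriminant $16-24<0$, so it is positive definite and the inequality holds for all real $\gamma,\theta$. Hence $|S|\ge \theta^2 r/(2\gamma^2)$, with no case split on the size of $\theta$ relative to $\gamma$.

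The only delicate point is Step 3: one must use the sharp per-index bound $t_i\le\gamma\theta$ rather than a cruder estimate. The final comparison is then just the positive-definiteness check in Step 4.
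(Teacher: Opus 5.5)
Your proof is correct. It is the first-moment calculation the paper alludes to; the paper imports this lemma from \cite{blockypolylog2025} without reproving it. One small correction to your closing remark: the sharp bound $t_i\le\gamma\theta$ is not actually needed. Average $Z_i=\norm{v_i}^2-\norm{v_i-\hat v}^2=2t_i-\theta^2$, which has mean $\theta^2$ and satisfies $i\in S$ iff $Z_i\ge\theta^2/2$, against the trivial bound $Z_i\le\norm{v_i}^2\le\gamma^2$. This gives $|S|\ge r\theta^2/(2\gamma^2-\theta^2)\ge r\theta^2/(2\gamma^2)$ directly, with no need for Step~4.
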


Now we begin our proof in earnest.

\paragraph{A decomposition lemma}
First, we show that if $A$ is almost integer-valued, $D(A)$ is
sufficiently large and $A_\ZZ$ has no repeated columns, then, given a fixed $\gamma$-factorization $A = UV$, some column $y \in Y$ admits a decomposition $v_y = v' + v''$, where $v'$ and $v''$ each have length at most $\sqrt{\gamma^2 - 1/16}$ and, like $v_y$ itself, have almost-integer inner products with every $u_x$. This will later be useful for proving (a strengthening of) \Cref{thmmain} by induction, decreasing $\gamma$ at each step. 

\begin{lemmanoparen}
\label{lemdecomposition}
Let $\gamma > 1/2$ and $0<\eps< 1/6$ Let $A\in \RR^{X\times Y}$
be a finite $\eps$-almost integer-valued matrix with $\normgamma A\le \gamma$, and fix a $\gamma$-factorization $A=UV$. Suppose further that $A_\ZZ$ has no duplicate columns and that
\[D(A) \ge 2\biggl( \frac{10^4 \gamma^4}{\eps}\biggr)^{C\gamma^4},\]
where $C$ is the absolute constant from \Cref{propalphasubset}. Then there exists $y_0 \in Y$ and a decomposition $v_{y_0} = v' + v''$ such that
\begin{itemize}
\item[i)] $\max\bigl(\norm{v'}^2, \norm{v''}^2\bigr) \le \gamma^2 - 1/16$; and
\item[ii)] for every $x \in X$, $\langle u_x, v' \rangle$ and $\langle u_x, v'' \rangle$ are at distance at most $3\eps$ from integers.
\end{itemize}
\end{lemmanoparen}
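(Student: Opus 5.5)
The plan is to find one column $y_0$ and an "average" vector $\hat v$ such that $v'=\hat v$ and $v''=v_{y_0}-\hat v$ work. The vector $\hat v$ will be the average of $v_y$ over a large set of columns on which every row of $A$ is nearly constant. Throughout, write $A(x,y)=\langle u_x,v_y\rangle$ with $\norm{u_x}\le 1$ and $\norm{v_y}\le\gamma$.

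\emph{Step 1: locate many nonzero entries in one row.} Pick a row $x_0$ attaining $D(A)=\sum_y|A_\ZZ(x_0,y)|$. Since $\normmax{A}\le\gamma$, each $|A_\ZZ(x_0,y)|\le \gamma+\eps$. Hence the set $Y_0=\{y: A_\ZZ(x_0,y)\neq 0\}$ has
\[|Y_0|\ge D(A)/(\gamma+1).\]

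\emph{Step 2: apply \Cref{propalphasubset}.} Apply it to $A_{X\times Y_0}$ with $M=\gamma$ and $\eta=\eps/(4\gamma)$. This yields $S\subseteq Y_0$ and $g:X\to[-\gamma,\gamma]$ such that every row is within $1/4$ of $g(x)$ on all but an $\eta$-fraction of $S$. The hypothesis on $D(A)$ is chosen so that $|S|\ge 2$; I expect this to follow after checking constants, since $\lceil 16\gamma\rceil/\eta\le 10^4\gamma^4/\eps$ for $\gamma>1/2$.

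Fix $x$. On the good part of $S$, each $A(x,y)$ is $\eps$-close to an integer and $(1/4)$-close to $g(x)$. Because $1/4+\eps<1/2$, all these integers equal a single $n(x)$. Set $\hat v=\ex_{y\in S}v_y$. Then $\langle u_x,\hat v\rangle$ is an average of values that are within $\eps$ of $n(x)$ on the good part, and within $2\gamma+1$ of $n(x)$ on an $\eta$-fraction. So
\[|\langle u_x,\hat v\rangle-n(x)|\le \eps+\eta(2\gamma+1)\le 2\eps.\]
Since $v_y$ has $\eps$-almost integer inner products, $v''=v_y-\hat v$ then has $3\eps$-almost integer inner products for every $y$. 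This gives (ii) for any choice of $y_0\in S$.

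Moreover $S\subseteq Y_0$ forces $n(x_0)\neq 0$, so $\theta:=\norm{\hat v}\ge|\langle u_{x_0},\hat v\rangle|\ge 1-2\eps\ge 2/3$.

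\emph{Step 3: norm bounds (i).} For $v''$, apply \Cref{lemsubtractaverage} to $\{v_y\}_{y\in S}$. It produces $y_0\in S$ with
\[\norm{v_{y_0}-\hat v}^2\le\gamma^2-\theta^2/2\le\gamma^2-2/9<\gamma^2-1/16.\]

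The remaining issue, and the main obstacle, is bounding $\norm{v'}=\theta$, which is where the absence of duplicate columns enters. Suppose $\theta^2>\gamma^2-1/16$. Then
\[\ex_{y\in S}\norm{v_y-\hat v}^2=\ex_{y\in S}\norm{v_y}^2-\theta^2<1/16.\]
By Markov's inequality, at least half of $S$, hence (as $|S|\ge2$) at least two columns $y\ne y'$, satisfy $\norm{v_y-\hat v}^2\le 1/8$, that is, $\norm{v_y-\hat v}\le 0.36$.

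For such $y$ and every $x$, the nearest integer $m$ to $A(x,y)$ satisfies
\[|m-n(x)|\le \eps+0.36+2\eps<1.\]
So $A_\ZZ(x,y)=n(x)$ for every $x$, and likewise for $y'$. Thus columns $y$ and $y'$ of $A_\ZZ$ coincide, contradicting the hypothesis. Hence $\theta^2\le\gamma^2-1/16$. Setting $v'=\hat v$ and $v''=v_{y_0}-\hat v$ completes the argument.
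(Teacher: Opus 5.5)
Your proof is correct and is essentially the paper's argument. You use the same decomposition $v_{y_0}=(v_{y_0}-\hat v)+\hat v$, obtained via \Cref{propalphasubset} and \Cref{lemsubtractaverage}, and your two local variations are harmless: you replace the pigeonhole on the value $b$ in row $x_0$ with the near-constancy of that row on $S\subseteq Y_0$, and you bound $\norm{\hat v}$ by a Markov-plus-rounding contradiction rather than the paper's parallelogram-and-convexity averaging over pairs.

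Two small fixes are needed.
\begin{itemize}
\item Since $\ex_{y\in S}\norm{v_y-\hat v}^2<1/16$ strictly, Markov gives \emph{strictly} more than half of $S$. That strictness is what yields two columns when $|S|=2$; ``at least half'' would give only one.
\item Your inequality $\lceil 16\gamma\rceil/\eta\le 10^4\gamma^4/\eps$ on its own only gives $|S|\ge 2/(\gamma+1)$. To absorb the factor $\gamma+1$ you must use the actual slack $\lceil 16\gamma\rceil/\eta\le 72\gamma^2/\eps$, with $C$ not too small, which the paper also tacitly assumes.
\end{itemize}
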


\begin{proof}
Write $M=\normmax A$ and $D = D(A)$. Recall the bound $M\le\gamma$, which we shall invoke freely. Since $D \ge 4$, some entry satisfies $A_\ZZ(x,y)\ne 0$, whence $M\ge 1-\eps\ge 1/2$. 

Let $x_0$ be a row with $ \sum_{y\in Y} \bigl| A_\ZZ(x_0,y)\bigr|  = D$.
Each of its entries has absolute value at most $M+1$, so it has at least $D/(M+1)$ nonzero entries in $A_\ZZ$. These entries take at most $2M+1$ distinct nonzero integer values, so by the pigeonhole principle there is a nonzero integer $b$ with $|b|\le M+1$ for which the set
\[
S=\bigl\{y\in Y:\ A_\ZZ(x_0,y)=b\bigr\}
\]
satisfies
\[
|S| \ge \frac{D}{(M+1)(2M+1)} \ge \frac{D}{12 M^2}.
\]
Let $\eta = \eps / (4 M)$.  By \Cref{propalphasubset} and our assumption on $D(A)$, there is a subset $S'\subseteq S$ with
\begin{equation}
\label{atleast2}
|S'| \ge |S|\biggl( \frac{\eta}{\lceil 16M\rceil}\biggr)^{C \gamma^4} \ge   \frac{D}{12 M^2}\biggl( \frac{\eta}{18M}\biggr)^{C \gamma^4} \ge \biggl( \frac{\epsilon}{10^4 \gamma^4}\biggr)^{C \gamma^4}  D \ge 2,
\end{equation}
and a function $g' : X\to [-M,M]$ satisfying 
\[  \Pr_{y \in S'}\Bigl[\bigl| A(x,y) - g'(x)\bigr|\ge 1/4 \Bigr]\le\eta\]
for all $x\in X$.
Rounding this function $g'$ to an integer-valued function $g : X\to [-M-1, M+1]$, for every $x\in X$ we have
\begin{equation}
\label{eq:gi-agrees}
\Pr_{y\in S'} \bigl[ A_\ZZ(x,y)\ne g(x) \bigr]\le \Pr_{y\in S'} \Bigl[ \bigl| A(x,y) - g'(x) \bigr| \ge 1/4\Bigr] \le \eta.
\end{equation}

Let $\hat v=\ex_{y\in S'}v_y$. Since $A_\ZZ(x_0,y)=b$ for all $y\in S'\subseteq S$ and $A$ is $\eps$-almost integer-valued,
\[
\bigl|\langle u_{x_0},\hat v\rangle\bigr|
=\Bigl|\ex_{y\in S'}A(x_0,y)\Bigr|
\ge |b|-\eps \ge 1-\frac16 \ge  \frac12 ,
\]
so $\norm{\hat v}\ge 1/2$ by the Cauchy--Schwarz inequality and $\norm{u_{x}}\le 1$. Applying \Cref{lemsubtractaverage} to the vectors $\{v_y\}_{y\in S'}$ (whose norms are at most $\gamma$, and whose average $\hat v$ has norm $\theta \ge 1/2$), we can find some $y_0 \in S'$ such that
\begin{equation}
\norm{v_{y_0}-\hat v}^2 \le \norm{v_{y_0}}^2-\frac{\theta^2}{2} \le \gamma^2-\frac18 .
\end{equation}
Since we assumed that $A_\ZZ$ has no repeated columns, for all $y,y'\in Y$ with $y\ne y'$, there exists some $x'$ in $X$ such that
\[\bigl| \langle u_{x'} , v_y - v_{y'}\rangle \bigr| \ge \frac12.\]
Hence by the Cauchy--Schwarz inequality, $\norm{v_y - v_{y'}} \ge 1/2$. The parallelogram identity now yields
\begin{equation}
\norm{ v_y + v_{y'}}^2 = 2\norm{v_y}^2 + 2\norm{v_{y'}}^2
- \norm{v_y - v_{y'}}^2 \le 4\gamma^2 - \frac14,
\end{equation}
whence the average $(v_y + v_{y'})/2$ has norm at most
$\sqrt{\gamma^2 - 1/16}$. By \eqref{atleast2}, we have $|S'| \ge 2$. Hence the vector $\hat v$ can be expressed as an average over pairs, and thus $\norm{\hat v}^2 \le \gamma^2 - 1/16$.

We now define $v' = v_{y_0} - \hat v$ and $v'' = \hat v$. From this construction, $v_{y_0} = v' + v''$ is immediate, and the above calculations prove claim (i).

It remains to prove claim (ii). For every $x\in X$, the fact that $A$ is $\eps$-almost integer-valued combined with our earlier bound
\[\Pr_{y\in S'}\bigl[ A_\ZZ(x,y)\ne g(x)\bigr] \le \eta\]
shows that
\begin{equation}
\bigl| \langle u_x, v'' \rangle - g(x)\bigr|
= \Bigl|\bigl(\ex_{y'\in S'} A(x,y') - g(x) \bigr)\Bigr|
\le (1-\eta)\eps + \eta (2M+1) \le 2\eps.
\end{equation}
Thus $\langle u_x, v'' \rangle$ is at distance at most $2\eps$ from an integer. Since $\langle u_x, v_{y_0} \rangle = \langle u_x, v' \rangle + \langle u_x, v'' \rangle$ is at distance at most $\eps$ from an integer, we see that $\langle u_x, v' \rangle$ is at distance at most $3\eps$ from an integer. 
\end{proof}

Repeatedly applying \Cref{lemdecomposition} gives the following decomposition lemma, from which \Cref{thmmain} follows easily.

\begin{lemmanoparen}\label{lemkey} Let $\gamma > 1/2$ and $0< \eps < 1/32$.
Let $A\in \RR^{X\times Y}$ be a finite $\eps$-almost integer-valued matrix with $\normgamma A \le \gamma$. Then there is a subset $Y' \subseteq Y$ and $3\eps$-almost integer-valued matrices $A',A''\in \RR^{X\times Y'}$ such that
\begin{itemize}
\item[i)] $A_\ZZ' + A_\ZZ'' = (A_{X \times Y'})_\ZZ$;
\item[ii)] $\max\bigl(\normgamma{A'}^2, \normgamma{A''}^2\bigr)
\le \gamma^2 - 1/16$; and
\item[iii)] $\block((A_{X \times Y \setminus Y'})_\ZZ) \le (\gamma/\eps)^{O(\gamma^4)}$.
\end{itemize}
\end{lemmanoparen}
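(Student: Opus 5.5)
Fix a $\gamma$-factorization $A=UV$ once and for all, and peel off columns greedily with \Cref{lemdecomposition}, which is stated for this fixed factorization. Set $Y_0=Y$, $Z_0=\emptyset$. At stage $t$ the current column set is $Y_t$. Let $\tilde Y_t\subseteq Y_t$ contain one representative from each class of columns $y$ whose columns $(A_\ZZ)_{X\times\{y\}}$ coincide. The submatrix $A_{X\times\tilde Y_t}$ is still $\eps$-almost integer-valued. It inherits the $\gamma$-factorization given by $U$ and the columns $v_y$, $y\in\tilde Y_t$. Its rounding has no duplicate columns, and $\eps<1/32<1/6$.

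If
\[D(A_{X\times\tilde Y_t})\ge 2\bigl(10^4\gamma^4/\eps\bigr)^{C\gamma^4},\]
then \Cref{lemdecomposition} supplies a column $y_0\in\tilde Y_t$ and a splitting $v_{y_0}=v'+v''$. We then remove from $Y_t$ the entire equivalence class of $y_0$, that is, all $y\in Y_t$ with $(A_\ZZ)_{X\times\{y\}}=(A_\ZZ)_{X\times\{y_0\}}$. Call this class $E_t$; set $Y_{t+1}=Y_t\setminus E_t$ and $Z_{t+1}=Z_t\cup E_t$. For every $y\in E_t$, record the decomposition $v_y=(v_y-v'')+v''$, with $v''=v''_t$ taken from the stage-$t$ application. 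Each column set strictly shrinks, so the process stops when the $D$-condition fails; at that point put $Y'=Z_T$ and $Y\setminus Y'=Y_T$.

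For $y\in Y'$ define the columns of $A'$ and $A''$ by $A''(x,y)=\langle u_x,v''_t\rangle$ and $A'(x,y)=A(x,y)-A''(x,y)$. For $y=y_0$ the vector is $v'$, so the bounds are immediate. For another $y$ in the same class, $A_\ZZ(\cdot,y)=A_\ZZ(\cdot,y_0)$ and both columns are $\eps$-close to integers. Hence $\langle u_x,v_y-v''\rangle$ lies within $\eps$ of $\langle u_x,v'\rangle+(\text{integer})$, which in turn is within $3\eps$ of an integer.

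This near-integrality is where I expect the real obstacle. The clean statement wants every entry of $A'$ to be $3\eps$-almost integer, and for $y\ne y_0$ we only get $4\eps$. The $\gamma_2$ bound also fails for $y\ne y_0$, since $\norm{v_y-v''}$ is not controlled. The fix I would try first is to avoid factoring $v_y$ itself. Instead, realize $A'$ and $A''$ through the factorizations $(U,\{v'_{t(y)}\})$ and $(U,\{v''_{t(y)}\})$, which simply duplicate $v'$ and $v''$ across the class. Then $A'+A''$ equals $(U V)$ with $v_{y_0}$ in place of $v_y$. This is a different real matrix from $A_{X\times Y'}$, but its rounding is the same, because $A_\ZZ(\cdot,y)=A_\ZZ(\cdot,y_0)$ and all values are within $6\eps<1/2$ of the integers involved.

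Rounding then gives $A'_\ZZ+A''_\ZZ=(A_{X\times Y'})_\ZZ$, using $3\eps+3\eps<1/2$ when adding nearly integral numbers. This settles (i) and the $3\eps$ requirement exactly. For (ii), the column norms are at most $\sqrt{\gamma^2-1/16}$ by part (i) of \Cref{lemdecomposition}, and $\normrow U\le1$.

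For (iii), the leftover matrix $(A_{X\times Y_T})_\ZZ$ has failing $D$ on its deduplicated version $\tilde Y_T$. So $B=(A_{X\times\tilde Y_T})_\ZZ$ satisfies $\block(B)\le 2D\le(\gamma/\eps)^{O(\gamma^4)}$ by \Cref{lemblockcomplexitybound}. Duplicating columns preserves blockiness, since we just copy each column into its class. Hence $\block((A_{X\times Y_T})_\ZZ)\le\block(B)$, which gives (iii).
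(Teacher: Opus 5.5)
Once you discard the initial $v_y=(v_y-v'')+v''$ splitting, your argument is correct and is essentially the paper's proof. Both fix one $\gamma$-factorization, peel off columns greedily with \Cref{lemdecomposition} on the deduplicated matrix, copy $v'$ and $v''$ across each class of duplicate columns, and bound the leftover via \Cref{lemblockcomplexitybound}. Your explicit checks that (i) holds for duplicated columns only after rounding, and that duplicating columns preserves blockiness for (iii), fill in details the paper leaves implicit.
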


\begin{proof}
First, assume that $A_\ZZ$ has no repeated columns. At the end of the proof we show why this may be done without loss of generality.

As in \Cref{lemdecomposition}, let $C$ be the absolute constant from \Cref{propalphasubset}, and define
\[D_{\eps,\gamma} = 2\biggl(\frac{10^4\gamma^4}{\eps}\biggr)^{C\gamma^4}\le (\gamma / \eps)^{O(\gamma^4)}.\]

We inductively define sequences $(Y_i)$, $(V_i')$, and $(V_i'')$. To begin with, let $Y_1 = Y$, and let $V_1'$ and $V_1''$ be empty matrices. While $D(A_{X \times Y_i}) \ge D_{\eps, \gamma}$, we apply \Cref{lemdecomposition} to $A_{X \times Y_i}$, obtaining $y_{i+1} \in Y_i$ and a decomposition $v_{y_{i+1}} = v_{i+1}' + v_{i+1}''$. We then set
\[Y_{i+1} = Y_i\setminus \{y_{i+1}\},\]
and let
\[V_{i+1}' = (V_i'\; v_{i+1}')\qquad\hbox{and}\qquad
V_{i+1}'' = (V_i''\; v_{i+1}'')\]
be defined by concatenation, with the new column being indexed by $y_{i+1}$.
Since $|Y_{i+1}|$ strictly decreases at each step, this process must terminate, say at some $Y_t$, $V_t'$, and $V_t''$.

We now set $Y' = Y \setminus Y_t$, $A' = UV_t'$ and $A'' = UV_t''$. \Cref{lemdecomposition} and the fact that $3\eps \le 1/8$ now give us claims (i) and (ii), and claim (iii) follows from \Cref{lemblockcomplexitybound} and the condition for terminating the sequence.

If $A_\ZZ$ has repeated columns, then replace each set $S\subseteq Y$ of repeated columns with a single representative $s\in S$. After running the procedure above with the resulting matrix, we add the discarded columns from each $S$ back into $V_t'$, $V_t''$ or $A_{X\times Y\setminus Y'}$, according to where the representative column $s$ was sent.
\end{proof}

Applying \Cref{lemkey} iteratively now furnishes our desired bound on $\block(A_\ZZ)$. \Cref{thmmain} is a consequence of the following more general statement regarding almost integer-valued matrices.

\begin{theorem}[Generalized main theorem]\label{thmgeneralmain}
Let $\gamma > 1/2$ and $\eps = 2^{-32\gamma^2-5}$. Let $A\in \RR^{X\times Y}$ be a $\eps$-almost integer-valued matrix with $\normgamma A \le \gamma$. Then 
\begin{equation}\block(A_\ZZ) \le 2^{O(\gamma^{6})}.\end{equation}
\end{theorem}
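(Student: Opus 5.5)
We prove \Cref{thmgeneralmain} by induction on $\lceil 16\gamma^2\rceil$, applying \Cref{lemkey} repeatedly to build a binary tree of matrices. The $\gamma_2$ norm drops by a fixed amount at each level, while the almost-integrality parameter grows by a factor of $3$.

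\textbf{Setting up the tree.} Set $\gamma_0=\gamma$ and $\gamma_{k+1}^2=\gamma_k^2-1/16$. The recursion can continue only while $\gamma_k>1/2$, so it has depth at most $K\le 16\gamma^2$.

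\textbf{Base case.} If $\normgamma{A}\le 1/2$, then $\normmax{A}\le 1/2$. Since $\eps<1/2$, every entry of $A$ lies within $\eps$ of an integer of absolute value at most $1/2$, hence of $0$. (Entries equal to exactly $1/2$ round down to $0$ by convention.) So $A_\ZZ=0$ and $\block(A_\ZZ)=0$.

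\textbf{Error control.} At depth $k$ the matrices are $3^k\eps$-almost integer-valued. \Cref{lemkey} needs this parameter to stay below $1/32$ throughout. With $\eps=2^{-32\gamma^2-5}$ we have
\[
3^{K}\eps\le 3^{16\gamma^2}\,2^{-32\gamma^2-5}=(9/16)^{8\gamma^2}\,2^{-5}<1/32,
\]
so the hypothesis holds at every level. This is exactly why $\eps$ was chosen this way.

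\textbf{Inductive step.} Apply \Cref{lemkey} with parameters $(\gamma_k,\eps_k)$, where $\eps_k=3^k\eps$. This splits the current matrix into two pieces.
\begin{itemize}
\item The columns outside $Y'$ give a leaf contribution with $\block\le(\gamma/\eps_k)^{O(\gamma^4)}$.
\item The columns in $Y'$ give two children $A',A''$, with $(A_{X\times Y'})_\ZZ=A'_\ZZ+A''_\ZZ$ and $\normgamma{A'},\normgamma{A''}\le\gamma_{k+1}$.
\end{itemize}
Block complexity is subadditive, both under sums and under concatenating disjoint column sets. The column concatenation step holds because a blocky matrix on $X\times Y_1$ extended by zeros to $X\times Y$ is still blocky. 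Hence
\[
\block(A_\ZZ)\le \block\bigl((A_{X\times Y\setminus Y'})_\ZZ\bigr)+\block(A'_\ZZ)+\block(A''_\ZZ).
\]

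\textbf{Counting.} Write $F(k)$ for the worst-case bound at depth $k$. The inequality above gives $F(k)\le(\gamma/\eps_k)^{O(\gamma^4)}+2F(k+1)$, with $F(K)=0$ at the base case. Unrolling gives at most $2^{K}$ leaves, each costing at most $(\gamma/\eps)^{O(\gamma^4)}$ since $\eps_k\ge\eps$. We have $\log(\gamma/\eps)=O(\gamma^2)$, so each leaf costs $2^{O(\gamma^6)}$, and $2^{K}=2^{O(\gamma^2)}$. The total is therefore $2^{O(\gamma^6)}$.

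\textbf{Main obstacle.} The delicate point is the bookkeeping in the induction, since \Cref{lemkey} is stated for a fixed $(\gamma,\eps)$. It is cleanest to prove by downward induction on $k$ that any $3^k\eps$-almost integer-valued matrix with $\gamma_2$ norm at most $\gamma_k$ has $\block\le F(k)$.

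Two further points need checking. First, the rounding identity $A'_\ZZ+A''_\ZZ=(A_{X\times Y'})_\ZZ$ must compose correctly down the tree; it does, since the identity is exact at each step. Second, $\gamma_k$ may drop below the hypothesis $\gamma>1/2$ of \Cref{lemkey}. In that case we pass directly to the base case.
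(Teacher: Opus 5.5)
Your proposal is correct and is essentially the paper's own argument: you iterate \Cref{lemkey} along a binary tree of depth at most $16\gamma^2$ and charge $(\gamma/\eps)^{O(\gamma^4)}=2^{O(\gamma^6)}$ to each of the $2^{O(\gamma^2)}$ nodes, and you also make explicit the $3^k\eps$ error growth and the check $3^{16\gamma^2}\eps=(9/16)^{8\gamma^2}/32<1/32$, which the paper only asserts in a parenthetical. The one thing you omit is the initial reduction to finite $X$ and $Y$ via the compactness argument of \cite[Theorem 3.10]{hhh2023}, which the paper invokes because \Cref{lemkey} is stated only for finite matrices.
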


\begin{proof}
As mentioned previously, a compactness argument in~\cite[Theorem 3.10]{hhh2023} allows us to assume without loss of generality that $A$ is a finite-dimensional matrix.

We apply \Cref{lemkey} to $A$; after padding the resulting matrices with zero columns so that they belong to $\RR^{X\times Y}$, we are left with a decomposition 
\[ A_\ZZ = A'_\ZZ + A''_\ZZ + F,\]
where $\normgamma{A'}^2$ and $\normgamma{A''}^2$ are both at most $\gamma^2 - 1/16$, and $\block(F) \le (\gamma / \eps)^{O(\gamma^4)} \le 2^{O(\gamma^6)}$. If $\normgamma{A'}^2 > 1/2$, we repeat the process with $A'$; otherwise, we have $\normgamma{A'} \le 1/2$ and thus $A'_\ZZ = 0$. We do the same for $A''$, and continuing inductively produces a binary tree in which any root-to-leaf path has length at most $16\gamma^2$. (Our choice of $\eps$ ensures that we always have $\eps \le 1/32$, so we are indeed permitted to invoke \Cref{lemkey} at each stage.) Each node in this tree contributes $2^{O(\gamma^6)}$ to the block complexity of $A$, and there are at most $2^{O(\gamma^2)}$ nodes, which yields the claimed bound on $\block(A_\ZZ)$.
\end{proof}

\Cref{thmmain} is now immediate, since a boolean matrix $A$ is $\eps$-almost integer-valued for any $\eps>0$.

\section*{Acknowledgements}
The first, fourth, and fifth listed authors would like to thank Julian Sahasrabudhe for suggesting the problem and for helpful discussions. The second and third authors are supported by the Natural Sciences and Engineering Research Council of Canada. The fourth author is supported by a {\small DA EPSRC} Heilbronn Institute for
Mathematical Research doctoral training 2025 grant (no.~{\small UKRI}3009).

\bibliographystyle{alphacitation}
\xpatchcmd{\em}{\itshape}{\slshape}{}{}
\bibliography{citations}

\bigskip\goodbreak\noindent
\textsc{Department of Pure Mathematics and Mathematical Statistics, University of Cambridge,  Cambridge, United Kingdom}

\smallskip\noindent
\textsl{E-mail address}: \texttt{cb2138@cam.ac.uk}

\bigskip\goodbreak\noindent
\textsc{Department of Mathematics and Statistics, McGill University, Montreal, Quebec, Canada}

\smallskip\noindent
\textsl{E-mail address}: \texttt{marcel.goh@mail.mcgill.ca}

\bigskip\goodbreak\noindent
\textsc{School of Computer Science, McGill University, Montreal, Quebec, Canada}

\smallskip\noindent
\textsl{E-mail address}: \texttt{hatami@cs.mcgill.ca}

\bigskip\goodbreak\noindent
\textsc{Department of Pure Mathematics and Mathematical Statistics, University of Cambridge,  Cambridge, United Kingdom}

\smallskip\noindent
\textsl{E-mail address}: \texttt{scj47@cam.ac.uk}

\bigskip\goodbreak\noindent
\textsc{Department of Pure Mathematics and Mathematical Statistics, University of Cambridge,  Cambridge, United Kingdom}

\smallskip\noindent
\textsl{E-mail address}: \texttt{dn410@cam.ac.uk}

\end{document}